\newtheorem{theorem}{Theorem}[section]
\newtheorem{lemma}[theorem]{Lemma}
\numberwithin{equation}{section}
\newcommand{\bqn}{\begin{equation}}
\newcommand{\eqn}{\end{equation}}
\begin{document}
\title[]{Optimal evaluations for the S\'{a}ndor-Yang mean by power mean}
\author{Zhen-Hang Yang}
\address{Zhen-Hang Yang, School of Mathematics and Computation Sciences,
Hunan City University, Yiyang 413000, China}
\email{yzhkm@163.com}
\thanks{}
\author{Yu-Ming Chu}
\address{Yu-Ming Chu (Corresponding author), School of Mathematics and
Computation Sciences, Hunan City University, Yiyang 413000, China}
\email{chuyuming@hutc.zj.cn}
\subjclass[2010]{26E60}
\keywords{power mean, second Seiffert mean, arithmetic mean, quadratic mean,
S\'{a}ndor-Yang mean}

\begin{abstract}
In this paper, we prove that the double inequality 
\begin{equation*}
M_{p}(a,b)<B(a,b)<M_{q}(a,b)
\end{equation*}
holds for all $a, b>0$ with $a\neq b$ if and only if $p\leq 4\log 2/(4+2\log
2-\pi)=1.2351\cdots$ and $q\geq 4/3$, where $%
M_{r}(a,b)=[(a^{r}+b^{r})/2]^{1/r}$ $(r\neq 0)$ and $M_{0}(a,b)=\sqrt{ab}$
is the $r$th power mean, $B(a,b)=Q(a,b)e^{A(a,b)/T(a,b)-1}$ is the S\'{a}%
ndor-Yang mean, $A(a,b)=(a+b)/2$, $Q(a,b)=\sqrt{(a^{2}+b^{2})/2}$ and $%
T(a,b)=(a-b)/[2\arctan((a-b)/(a+b))]$.
\end{abstract}

\maketitle




\section{Introduction}

\bigskip For $r\in \mathbb{R}$, the $r$th power mean $M_{r}(a,b)$ of two
distinct positive real numbers $a$ and $b$ is defined by 
\begin{equation}
M_{r}(a,b)=%
\begin{cases}
\left(\frac{a^{r}+b^{r}}{2}\right)^{1/r}, \quad r\neq 0, \\ 
\sqrt{ab}, \quad r=0.%
\end{cases}%
\end{equation}

It is well known that $M_{r}(a,b)$ is continuous and strictly increasing
with respect to $r\in \mathbb{R}$ for fixed $a, b>0$ with $a\neq b$. Many
classical means are the special cases of the power mean, for example, $%
M_{-1}(a,b)=2ab/(a+b)=H(a,b)$ is the harmonic mean, $M_{0}(a,b)=\sqrt{ab}%
=G(a,b)$ is the geometric mean, $M_{1}(a,b)=(a+b)/2=A(a,b)$ is the
arithmetic mean, and $M_{2}(a,b)=\sqrt{(a^{2}+b^{2})/2}=Q(a,b)$ is the
quadratic mean. The main properties for the power mean are given in [1].

Let 
\begin{equation*}
L(a,b)=\frac{a-b}{\log a-\log b}, ~~I(a,b)=\frac{1}{e}\left(\frac{a^{a}}{%
b^{b}}\right)^{1/(a-b)}, ~~P(a,b)=\frac{a-b}{2\arcsin\left(\frac{a-b}{a+b}%
\right)},
\end{equation*}
\begin{equation*}
U(a,b)=\frac{a-b}{\sqrt{2}\arctan\left(\frac{a-b}{\sqrt{2ab}}\right)},
~~T^{\ast}(a,b)=\frac{2}{\pi}\int_{0}^{\pi/2}\sqrt{a^{2}\cos^{2}\theta+b^{2}%
\sin^{2}\theta}d\theta,
\end{equation*}
\begin{equation*}
NS(a,b)=\frac{a-b}{2\sinh^{-1}\left(\frac{a-b}{a+b}\right)},
~~X(a,b)=A(a,b)e^{G(a,b)/P(a,b)-1},
\end{equation*}
\begin{equation}
T(a,b)=\frac{a-b}{2\arctan\left(\frac{a-b}{a+b}\right)},
~~B(a,b)=Q(a,b)e^{A(a,b)/T(a,b)-1}
\end{equation}
be respectively the logarithmic mean, identric mean, first Seiffert mean
[2], Yang mean [3], Toader mean [4], Neuman-S\'{a}ndor mean [5, 6], S\'{a}%
ndor mean [7], second Seiffert mean [8], S\'{a}ndor-Yang mean [3] of $a$ and 
$b$.

Recently, the sharp bounds for certain bivariate means in terms of the power
mean have attracted the attention of many mathematicians. Lin [9] proved
that the double inequality 
\begin{equation*}
M_{p}(a,b)<L(a,b)<M_{q}(a,b)
\end{equation*}
holds for all $a, b>0$ with $a\neq b$ if and only if $p\leq 0$ and $q\geq 1/3
$.

Stolarsky [10] and Pittenger [11] found that $M_{2/3}(a,b)$ and $M_{\log
2}(a,b)$ are respectively the best possible lower and upper power mean
bounds for the identric mean $I(a,b)$. In [12-15], the authors proved that
the double inequality 
\begin{equation*}
M_{p}(a,b)<T^{\ast}(a,b)<M_{q}(a,b)
\end{equation*}
holds for all $a, b>0$ with $a\neq b$ if and only if $p\leq 3/2$ and $q\geq
\log 2/(\log\pi-\log 2)$.

Jagers [16], H\"{a}st\"{o} [17, 18], Yang [19], and Costin and Toader [20]
proved that $p_{1}=\log 2/\log\pi$, $q_{1}=2/3$, $p_{2}=\log 2/(\log\pi-\log
2)$ and $q_{2}=5/3$ are the best possible parameters such that the double
inequalities 
\begin{equation*}
M_{p_{1}}(a,b)<P(a,b)<M_{q_{1}}(a,b), ~~M_{p_{2}}(a,b)<T(a,b)<M_{q_{2}}(a,b)
\end{equation*}
hold for all $a, b>0$ with $a\neq b$.

In [20-25], the authors proved that the double inequalities 
\begin{equation*}
M_{\lambda_{1}}(a,b)<NS(a,b)<M_{\mu_{1}}(a,b),
\end{equation*}
\begin{equation*}
M_{\lambda_{2}}(a,b)<U(a,b)<M_{\mu_{2}}(a,b),
\end{equation*}
\begin{equation*}
M_{\lambda_{3}}(a,b)<X(a,b)<M_{\mu_{3}}(a,b)
\end{equation*}
hold for all $a, b>0$ with $a\neq b$ if and only if $\lambda_{1}\leq \log 2/%
\log[2\log(1+\sqrt{2})]$, $\mu_{1}\geq 4/3$, $\lambda_{2}\leq 2\log
2/(2\log\pi-\log 2)$, $\mu_{2}\geq 4/3$, $\lambda_{3}\leq 1/3$ and $%
\mu_{3}\geq \log 2/(1+\log 2)$.

Yang et. al. [26] proved that 
\begin{equation}
M_{1}(a,b)<B(a,b)<M_{2}(a,b)
\end{equation}
for all $a, b>0$ with $a\neq b$.

Motivated by inequality (1.3), it is natural to ask what are the greatest
value $p$ and the least value $q$ such that the double inequality 
\begin{equation*}
M_{p}(a,b)<B(a,b)<M_{q}(a,b)
\end{equation*}
holds for all $a, b>0$ with $a\neq b$? The main purpose of this paper is to
answer this question.

\bigskip

\section{Lemmas}

\bigskip

In order to prove our main results we need several lemmas, which we present
in this section.

\setcounter{equation}{0}

\bigskip

\begin{lemma}
(See [27, Lemma 7]) Let $\{a_{k}\}_{k=0}^{\infty}$ be a nonnegative real
sequence with $a_{m}>0$ and $\sum_{k=m+1}^{\infty}a_{k}>0$, and 
\begin{equation*}
P(t)=\sum_{k=0}^{m}a_{k}t^{k}-\sum_{k=m+1}^{\infty}a_{k}t^{k}
\end{equation*}
be a convergent power series on the interval $(0, \infty)$. Then there
exists $t_{m+1}\in (0, \infty)$ such that $P(t_{m+1})=0$, $P(t)>0$ for $t\in
(0, t_{m+1})$ and $P(t)<0$ for $t\in (t_{m+1}, \infty)$.
\end{lemma}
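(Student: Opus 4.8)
The plan is to reduce the sign analysis of $P$ to the monotonicity of an auxiliary function obtained by dividing out the pivot power $t^{m}$. Set $Q(t)=P(t)/t^{m}$ for $t\in(0,\infty)$; since $t^{m}>0$ there, $Q$ and $P$ have identical signs at every point, and it suffices to prove that $Q$ is strictly decreasing on $(0,\infty)$, is positive near $0$, and is negative near $\infty$. Writing
\[
Q(t)=\sum_{k=0}^{m}a_{k}t^{k-m}-\sum_{k=m+1}^{\infty}a_{k}t^{k-m},
\]
the decisive structural feature is the exponent split: in the first sum every exponent $k-m$ is $\le 0$, while in the second every exponent $k-m$ is $\ge 1$.

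For the monotonicity I would differentiate term by term (legitimate because convergence of $P$ on all of $(0,\infty)$ forces an infinite radius of convergence, so the series may be differentiated throughout $(0,\infty)$):
\[
Q'(t)=\sum_{k=0}^{m-1}a_{k}(k-m)t^{k-m-1}-\sum_{k=m+1}^{\infty}a_{k}(k-m)t^{k-m-1}.
\]
Every summand is $\le 0$: for $k<m$ the factor $(k-m)<0$ makes $a_{k}(k-m)t^{k-m-1}\le 0$, the $k=m$ term has vanished under differentiation, and for $k>m$ the positive factor $(k-m)$ enters with a minus sign. Since $\sum_{k>m}a_{k}>0$, at least one term is strictly negative, so $Q'(t)<0$ for all $t\in(0,\infty)$ and $Q$ is strictly decreasing.

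For the boundary behavior I would evaluate the two one-sided limits. As $t\to 0^{+}$ the second sum tends to $0$ (all its exponents are positive) while the first sum stays $\ge a_{m}>0$ (the $k=m$ term equals $a_{m}$ and the remaining terms are nonnegative), so $\lim_{t\to 0^{+}}Q(t)>0$. As $t\to\infty$ the first sum tends to $a_{m}$ (the negative-exponent terms die and the $k=m$ term contributes $a_{m}$) while the second sum tends to $+\infty$ (at least one positive-exponent term with a positive coefficient diverges and all its terms are nonnegative), so $\lim_{t\to\infty}Q(t)=-\infty$. A strictly decreasing continuous function running from a positive limit down to $-\infty$ has exactly one zero $t_{m+1}\in(0,\infty)$, is positive on $(0,t_{m+1})$, and is negative on $(t_{m+1},\infty)$; multiplying back by $t^{m}>0$ transfers these three statements verbatim to $P$.

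The only genuine care needed lies in the limit and differentiation steps: I must confirm that ``convergent on $(0,\infty)$'' yields an infinite radius of convergence, so that both sub-series converge for every $t>0$ and term-by-term differentiation is valid, and I should treat separately the degenerate case $a_{0}=\dots=a_{m-1}=0$, in which the first sum is simply the constant $a_{m}$ rather than blowing up as $t\to 0^{+}$. Even there $\lim_{t\to 0^{+}}Q(t)=a_{m}>0$, so the conclusion is unaffected; everything else is a routine application of the intermediate value theorem.
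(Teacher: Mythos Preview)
The paper does not actually prove this lemma; it is quoted verbatim from reference~[27] and simply invoked in the proof of Lemma~2.3, so there is no in-paper argument to compare yours against.

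That said, your argument is correct and is essentially the natural one. Dividing by the pivot power $t^{m}$ to get $Q(t)=P(t)/t^{m}$, checking $Q'(t)<0$ termwise, and reading off the limits $Q(0^{+})\ge a_{m}>0$ and $Q(\infty)=-\infty$ is exactly the intended mechanism; the strict negativity of $Q'$ is guaranteed by the hypothesis $\sum_{k>m}a_{k}>0$, which forces some $a_{k_{0}}>0$ with $k_{0}>m$. Your caveats are the right ones: convergence of the tail series on all of $(0,\infty)$ does give infinite radius of convergence, so termwise differentiation and the limit computations are legitimate, and the case $a_{0}=\dots=a_{m-1}=0$ merely replaces a $+\infty$ limit by the finite value $a_{m}>0$. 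One minor sharpening: you could phrase the $t\to 0^{+}$ conclusion as $\liminf_{t\to 0^{+}}Q(t)\ge a_{m}>0$ to cover both the blow-up and the finite-limit sub-cases in one line.
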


\bigskip

\begin{lemma}
(See [22, Lemma 6]) The function $r\rightarrow 2^{1/r}M_{r}(a, b)$ is
strictly decreasing and log-convex on $(0, \infty)$ for all $a, b>0$ with $%
a\neq b$.
\end{lemma}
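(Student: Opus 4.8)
The plan is to collapse both assertions to one elementary single-variable inequality. From the definition of $M_r$ one has the identity $2^{1/r}M_r(a,b)=(a^r+b^r)^{1/r}$, so after taking logarithms it suffices to show that
\[
\varphi(r):=\frac{\ln\bigl(a^r+b^r\bigr)}{r}
\]
is strictly decreasing and convex on $(0,\infty)$. Writing $x=a/b>0$ and $\varphi(r)=\ln b+\dfrac{\ln(1+x^r)}{r}$, and noting that replacing $x$ by $1/x$ changes $\dfrac{\ln(1+x^r)}{r}$ only by the $r$-independent constant $-\ln x$ (which affects neither monotonicity nor convexity), I may assume $x>1$ and study $\psi(r)=\dfrac{\ln(1+x^r)}{r}$.

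For monotonicity I would compute
\[
\psi'(r)=\frac{1}{r^{2}}\left(\frac{r x^{r}\ln x}{1+x^{r}}-\ln(1+x^{r})\right)
\]
and substitute $t=x^{r}\in(1,\infty)$, so that $r\ln x=\ln t$ and the parenthesis equals $\dfrac{t\ln t-(1+t)\ln(1+t)}{1+t}$. Since $0<t\ln t<(1+t)\ln(1+t)$ for $t>1$, this is negative, hence $\psi'(r)<0$, and so $r\mapsto 2^{1/r}M_r(a,b)$ is strictly decreasing.

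Convexity is the substantive part. Setting $L(r)=\ln(1+x^{r})$, differentiating twice gives $\psi''(r)=\bigl(r^{2}L''(r)-2rL'(r)+2L(r)\bigr)/r^{3}$, and the same substitution $t=x^r$, then $s=\ln t\in(0,\infty)$, turns the numerator into the function
\[
\Phi(s)=s^{2}\frac{e^{s}}{(1+e^{s})^{2}}-2s\frac{e^{s}}{1+e^{s}}+2\ln(1+e^{s}),
\]
which no longer involves $x$. So everything comes down to $\Phi(s)>0$ for $s>0$. I would check $\Phi(0)=2\ln 2>0$ and then observe that, using $\tfrac{d}{ds}\tfrac{e^s}{1+e^s}=\tfrac{e^s}{(1+e^s)^2}$, every term of $\Phi'(s)$ cancels except $s^{2}\tfrac{d}{ds}\tfrac{e^s}{(1+e^s)^2}$, leaving
\[
\Phi'(s)=s^{2}\,\frac{e^{s}\bigl(1-e^{s}\bigr)}{(1+e^{s})^{3}}<0\qquad(s>0),
\]
together with $\lim_{s\to\infty}\Phi(s)=0$ (the first term is $O(s^{2}e^{-s})$, and the remaining two group as $\frac{2se^{-s}}{1+e^{-s}}+2\ln(1+e^{-s})\to0$). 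A function strictly decreasing on $[0,\infty)$ with limit $0$ at infinity is strictly positive, so $\Phi>0$, hence $\psi''>0$, $\varphi$ is strictly convex, and $r\mapsto 2^{1/r}M_r(a,b)$ is log-convex.

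The main obstacle is exactly this convexity step: one must carry the second-derivative computation far enough to recognize that, after the substitution, the whole difficulty is the single $x$-free function $\Phi$, and then find the economical route to its positivity (the derivative cancellation plus the boundary value $0$ at $+\infty$, rather than a brute-force sign analysis). Everything else — the identity for $2^{1/r}M_r$, the homogeneity normalization, and the $x\leftrightarrow 1/x$ symmetry — is routine bookkeeping.
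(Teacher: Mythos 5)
Your proof is correct. Note first that the paper itself gives no proof of this lemma: it is quoted verbatim from reference [22, Lemma 6], so there is nothing internal to compare against, and your contribution is a complete, self-contained verification. The reduction $2^{1/r}M_r(a,b)=(a^r+b^r)^{1/r}$, the normalization to $\psi(r)=\ln(1+x^r)/r$ with $x>1$ via the $x\leftrightarrow 1/x$ symmetry, and the substitution $t=x^r$ are all sound. The monotonicity step is fine: $t\ln t<(1+t)\ln(1+t)$ for $t>1$ because $u\mapsto u\ln u$ is increasing there. The convexity step is the one place worth double-checking, and it holds up: with $g(s)=e^s/(1+e^s)$ one indeed gets $\Phi'(s)=s^2g''(s)=s^2e^s(1-e^s)/(1+e^s)^3<0$ for $s>0$ after the telescoping cancellation, $\Phi(0)=2\ln 2$, and $\Phi(s)\to 0$ as $s\to\infty$ (writing $\ln(1+e^s)=s+\ln(1+e^{-s})$ makes the cancellation of the linear terms explicit), so $\Phi>0$ and $\psi''>0$. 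You even obtain strict log-convexity, slightly more than the lemma asserts. The only stylistic remark is that the monotonicity claim is in fact a one-line consequence of the convexity computation plus $\lim_{r\to\infty}\psi(r)=\ln\max\{a,b\}-\ln b$ being finite, but your direct derivative argument is equally short and arguably cleaner.
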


\medskip

\begin{lemma}
Let $t>0$, $p\in \mathbb{R}$ and 
\begin{equation}
f_{1}(t, p)=-\arctan(\tanh(t))+\sinh(t)\cosh(t)-\tanh(pt)\sinh^{2}(t).
\end{equation}
Then the following statements are true:

$(i)$ if $p\leq 1$, then $f_{1}(t, p)$ is strictly increasing with respect
to $t$ on $(0, \infty)$;

$(ii)$ if $p\geq 4/3$, then $f_{1}(t, p)$ is strictly decreasing with
respect to $t$ on $(0, \infty)$;

$(iii)$ if $p\in (1, 4/3)$, then there exists $t_{1}\in (0, \infty)$ such
that $f_{1}(t, p)$ is strictly increasing with respect to $t$ on $(0, t_{1})$
and strictly decreasing with respect to $t$ on $(t_{1}, \infty)$.
\end{lemma}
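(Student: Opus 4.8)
The plan is to reduce all three assertions to determining the sign of $\partial f_{1}(t,p)/\partial t$. Using $\frac{d}{dt}\arctan(\tanh t)=1/\cosh(2t)$, $\frac{d}{dt}(\sinh t\cosh t)=\cosh(2t)$ and $\frac{d}{dt}\bigl(\tanh(pt)\sinh^{2}t\bigr)=p\sinh^{2}t/\cosh^{2}(pt)+\tanh(pt)\sinh(2t)$, one finds, after clearing the positive denominator $\cosh(2t)\cosh^{2}(pt)$ and applying $\sinh^{2}t=(\cosh 2t-1)/2$, $\cosh^{2}(pt)=(\cosh 2pt+1)/2$ and $\cosh A\cosh B-\sinh A\sinh B=\cosh(A-B)$, that
\begin{equation*}
\frac{\partial f_{1}}{\partial t}=\frac{g(t,p)}{4\cosh(2t)\cosh^{2}(pt)},\qquad g(t,p)=\cosh\bigl(2(2-p)t\bigr)+(1-p)\cosh(4t)-\cosh(2pt)+2p\cosh(2t)-(1+p).
\end{equation*}
Thus $\partial f_{1}/\partial t$ has the sign of $g(\cdot,p)$. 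Expanding each hyperbolic cosine in its Maclaurin series, the constant terms cancel and
\begin{equation*}
g(t,p)=\sum_{k=1}^{\infty}\frac{4^{k}d_{k}(p)}{(2k)!}\,t^{2k},\qquad d_{k}(p)=(2-p)^{2k}+(1-p)4^{k}-p^{2k}+2p,\qquad d_{1}(p)=8-6p,
\end{equation*}
so everything reduces to the sign pattern of the sequence $\{d_{k}(p)\}_{k\ge 1}$.

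For part $(i)$ let $p\le 1$. Then $(2-p)^{2}\ge\max\{1,p^{2}\}$, hence $(2-p)^{2k}-p^{2k}\ge(2-p)^{2}-p^{2}=4-4p$ and therefore $d_{k}(p)\ge(4-2p)+(1-p)4^{k}\ge 2>0$ for all $k\ge 1$, so $g(t,p)>0$ and $f_{1}$ is strictly increasing. For part $(ii)$ let $p\ge 4/3$. Then $d_{1}(p)=8-6p\le 0$, while for $k\ge 2$ the identity $p^{2k}-(2-p)^{2k}=4(p-1)\sum_{j=0}^{k-1}p^{2j}(2-p)^{2(k-1-j)}\ge 4(p-1)p^{2k-2}$ gives $d_{k}(p)\le 2p-(p-1)\bigl(4p^{2k-2}+4^{k}\bigr)\le 2p-(p-1)(4p^{2}+16)=-(4p^{3}-4p^{2}+14p-16)<0$, since the cubic $4p^{3}-4p^{2}+14p-16$ is strictly increasing (its derivative $12p^{2}-8p+14$ has negative discriminant) and positive at $p=4/3$. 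Hence $g(t,p)<0$ and $f_{1}$ is strictly decreasing.

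For part $(iii)$ let $p\in(1,4/3)$ and write $d_{k}(p)=4^{k}\varphi(k)$ with $\varphi(k)=\alpha^{k}-\beta^{k}+2p\gamma^{k}+(1-p)$, where $\alpha=(2-p)^{2}/4$, $\beta=p^{2}/4$, $\gamma=1/4$. One checks $0<\alpha<\gamma<\beta<1$, so $\varphi(k)\to 1-p<0$, whereas $\varphi(1)=2-3p/2>0$. The crux is the implication $\varphi(k)\le 0\Rightarrow\varphi(k+1)<0$: multiplying $\beta^{k}\ge\alpha^{k}+2p\gamma^{k}-(p-1)$ by $\beta$ and using $(\beta-\alpha)\alpha^{k}+2p(\beta-\gamma)\gamma^{k}>0>(\beta-1)(p-1)$ yields $\beta^{k+1}>\alpha^{k+1}+2p\gamma^{k+1}-(p-1)$, i.e. $\varphi(k+1)<0$. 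Consequently $\{k\ge 1:d_{k}(p)>0\}$ is a nonempty finite initial segment $\{1,\dots,m\}$ and $d_{k}(p)\le 0$ with $d_{k}(p)\to-\infty$ for $k>m$. Regarding $g(t,p)$ as a power series in $\tau=t^{2}$, Lemma 2.1 then provides a unique $t_{1}\in(0,\infty)$ with $g(t,p)>0$ for $t\in(0,t_{1})$ and $g(t,p)<0$ for $t\in(t_{1},\infty)$, which is precisely assertion $(iii)$. I expect the main obstacle to be exactly this last point — showing the sign of $\{d_{k}(p)\}$ changes once and only once — with the monotone recursion $\varphi(k)\le 0\Rightarrow\varphi(k+1)<0$ being the decisive device; the reductions in $(i)$ and $(ii)$ and the initial derivative computation are routine by comparison.
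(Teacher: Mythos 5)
Your proposal is correct, and its skeleton coincides with the paper's: you compute the same derivative, arrive (after clearing $4\cosh(2t)\cosh^{2}(pt)$) at the same auxiliary function ($your $g(t,p)$ is the paper's $f_{2}(2t,p)$), expand it into the same even power series (your $d_{k}(p)$ is literally the paper's $u_{k}(p)$ since $2^{2k}=4^{k}$), and invoke Lemma 2.1 for part $(iii)$. Where you diverge is in how the sign of the coefficient sequence is controlled. For $(i)$ and $(ii)$ the paper first proves $\partial f_{2}/\partial p<0$ and thereby reduces everything to the two boundary cases $p=1$ (where $f_{2}(t,1)=2[\cosh t-1]>0$ in closed form) and $p=4/3$ (where $u_{1}=0$ and $u_{n}<0$ for $n\geq 2$ by a short explicit computation); you instead estimate $d_{k}(p)$ directly for every admissible $p$, via $(2-p)^{2k}-p^{2k}\geq(2-p)^{2}-p^{2}$ when $p\leq 1$ and via the factorization $p^{2k}-(2-p)^{2k}=4(p-1)\sum_{j}p^{2j}(2-p)^{2(k-1-j)}$ plus a cubic sign check when $p\geq 4/3$. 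The paper's reduction is slicker (one monotonicity computation handles all $p$ at once), while yours is self-contained and avoids the extra differentiation in $p$; both are sound. For $(iii)$ the paper verifies $u_{1}(p)>0$, $u_{n}(p)/2^{2n}\to 1-p<0$, and that $\{u_{n}(p)\}$ is strictly decreasing, whereas you prove the weaker but sufficient absorbing property $\varphi(k)\leq 0\Rightarrow\varphi(k+1)<0$; either statement delivers the single sign change that Lemma 2.1 requires. All of your individual estimates check out (in particular $(2-p)^{2}\geq 1$ for $p\leq 1$ justifies the step $A^{k}-B^{k}\geq A-B$, and $4p^{3}-4p^{2}+14p-16>0$ at $p=4/3$ with positive derivative), so the argument is complete.
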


\begin{proof}
Let 
\begin{equation}
u_{n}(p)=(2-p)^{2n}-p^{2n}+(1-p)2^{2n}+2p,
\end{equation}
\begin{equation*}
f_{2}(t, p)=4\sinh^{2}(t)\cosh^{2}\left(\frac{pt}{2}\right)-4p\cosh(t)%
\sinh^{2}\left(\frac{t}{2}\right)-\sinh(2t)\sinh(pt).
\end{equation*}
Then simple computations lead to 
\begin{equation}
u_{1}\left(\frac{4}{3}\right)=0, ~~u_{n}\left(\frac{4}{3}\right)=-\frac{%
4^{2n}-2^{2n}}{3^{2n}}-\frac{2^{2n}-8}{3}<0 ~~(n\geq 2),
\end{equation}
\begin{equation}
\frac{\partial f_{1}(t, p)}{\partial t}=-\frac{1}{\cosh(2t)}+\cosh(2t)-\frac{%
p\sinh^{2}(t)}{\cosh^{2}(pt)}-\tanh(pt)\sinh(2t)
\end{equation}
\begin{equation*}
=\frac{f_{2}(2t, p)}{4\cosh(2t)\cosh^{2}(pt)},
\end{equation*}
\begin{equation}
f_{2}(t,p)=\cosh[(p-2)t]-\cosh(pt)+(1-p)\cosh(2t)+2p\cosh(t)-p-1
\end{equation}
\begin{equation*}
=\sum_{n=1}^{\infty}\frac{u_{n}(p)}{(2n)!}t^{2n},
\end{equation*}
\begin{equation}
\frac{\partial f_{2}(t,p)}{\partial p}=2\cosh(t)-\cosh(2t)+t\sinh[(p-2)t]%
-t\sinh(pt)-1
\end{equation}
\begin{equation*}
=-2[\cosh(t)-1]\cosh(t)-2t\sinh(t)\cosh[(p-1)t]<0.
\end{equation*}

$(i)$ If $p\leq 1$, then equations (2.5) and (2.6) lead to 
\begin{equation}
f_{2}(t,p)\geq f_{2}(t, 1)=2[\cosh(t)-1]>0.
\end{equation}

Therefore, Lemma 2.3$(i)$ follows easily from (2.4) and (2.7).

$(ii)$ If $p\geq 4/3$, then from (2.3), (2.5) and (2.6) we have 
\begin{equation}
f_{2}(t,p)\leq f_{2}\left(t, \frac{4}{3}\right)=\sum_{n=1}^{\infty}\frac{%
u_{n}(4/3)}{(2n)!}t^{2n}<0.
\end{equation}

Therefore, Lemma 2.3$(ii)$ follows easily from (2.4) and (2.8).

$(iii)$ If $p\in (1, 4/3)$, then from (2.4) it is enough to prove that there
exists $t_{1}\in (0, \infty)$ such that $f_{2}(t, p)>0$ for $t\in (0, t_{1})$
and $f_{2}(t, p)<0$ for $t\in (t_{1}, \infty)$.

It follows from (2.2) that 
\begin{equation}
u_{1}(p)=2(4-3p)>0, ~~\lim_{n\rightarrow \infty}\frac{u_{n}(p)}{2^{2n}}%
=1-p<0,
\end{equation}
\begin{equation}
u_{n+1}(p)-u_{n}(p)=-(p-1)\left[(3-p)(2-p)^{2n}+3\times 2^{2n}+(p+1)p^{2n}%
\right]<0
\end{equation}
for all $n\geq 1$.

Therefore, the desired result follows from (2.5), (2.9), (2.10) and Lemma
2.1.
\end{proof}

\medskip

\begin{lemma}
Let $t>0$, $p\in \mathbb{R}$ $f_{1}(t,p)$ be defined by (2.1). Then

$(i)$ $f_{1}(t, p)>0$ for all $t\in (0, \infty)$ if and only if $p\leq 1$;

$(ii)$ $f_{1}(t, p)<0$ for all $t\in (0, \infty)$ if and only if $p\geq 4/3$;

$(iii)$ there exists $t_{0}\in (0, \infty)$ such that $f_{1}(t_{0}, p)=0$, $%
f_{1}(t, p)>0$ for $t\in (0, t_{0})$ and $f_{1}(t, p)<0$ for $t\in (t_{0},
\infty)$ if $p\in (1, 4/3)$.
\end{lemma}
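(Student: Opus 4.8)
The plan is to deduce all three parts from Lemma 2.3 together with two elementary observations about $f_1$: its value at $t=0$ and its limit as $t\to\infty$. First, $f_1(0,p)=-\arctan 0+0-0=0$ for every $p$. Second, for the limit at infinity I would rewrite
\[
\sinh(t)\cosh(t)-\tanh(pt)\sinh^{2}(t)=\left[\sinh(t)\cosh(t)-\sinh^{2}(t)\right]+\left[1-\tanh(pt)\right]\sinh^{2}(t),
\]
where the first bracket equals $\sinh(t)e^{-t}=(1-e^{-2t})/2\to 1/2$, while for $p>1$ the second bracket is a constant multiple of $e^{(2-2p)t}(1+o(1))\to 0$, because $1-\tanh(pt)\sim 2e^{-2pt}$ and $\sinh^{2}(t)\sim e^{2t}/4$. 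Since $\arctan(\tanh t)\to\pi/4$, this gives $\lim_{t\to\infty}f_1(t,p)=1/2-\pi/4=(2-\pi)/4<0$ whenever $p>1$.

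Next I would split into the three ranges of $p$. If $p\le 1$, Lemma 2.3$(i)$ says $f_1(\cdot,p)$ is strictly increasing on $(0,\infty)$, so $f_1(0,p)=0$ forces $f_1(t,p)>0$ there, which is the ``if'' direction of $(i)$. If $p\ge 4/3$, Lemma 2.3$(ii)$ says $f_1(\cdot,p)$ is strictly decreasing, so $f_1(t,p)<f_1(0,p)=0$, which is the ``if'' direction of $(ii)$. If $p\in(1,4/3)$, Lemma 2.3$(iii)$ provides $t_1>0$ with $f_1(\cdot,p)$ strictly increasing on $(0,t_1)$ and strictly decreasing on $(t_1,\infty)$; hence $f_1(t,p)>0$ on $(0,t_1]$, and on $(t_1,\infty)$ the function strictly decreases from the positive value $f_1(t_1,p)$ down toward the negative limit $(2-\pi)/4$, so by continuity it has a unique zero $t_0>t_1$ with $f_1>0$ on $(t_1,t_0)$ and $f_1<0$ on $(t_0,\infty)$. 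Combined with positivity on $(0,t_1]$ this is exactly statement $(iii)$.

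Finally, the converses of $(i)$ and $(ii)$ fall out of the trichotomy. If $f_1(t,p)>0$ for all $t>0$, then $p$ cannot lie in $[4/3,\infty)$ (second case) nor in $(1,4/3)$ (third case, where $f_1$ is eventually negative), so $p\le 1$; symmetrically, if $f_1(t,p)<0$ for all $t>0$, then $p$ cannot lie in $(-\infty,1]$ nor in $(1,4/3)$, so $p\ge 4/3$. The only step that is not purely formal is the limit $\lim_{t\to\infty}f_1(t,p)$ for $p\in(1,4/3)$: the difference $\sinh(t)\cosh(t)-\tanh(pt)\sinh^{2}(t)$ is an $\infty-\infty$ indeterminacy, and I expect the modest care needed to resolve it — via the splitting above and the estimate $1-\tanh(pt)\sim 2e^{-2pt}$ — to be the main obstacle. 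Everything else is a direct invocation of Lemma 2.3 and the identity $f_1(0,p)=0$.
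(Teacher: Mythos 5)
Your proof is correct and follows the paper's strategy almost exactly: both deduce the ``if'' directions and part $(iii)$ from the piecewise monotonicity in Lemma 2.3 together with $f_{1}(0,p)=0$ and the limit $\lim_{t\to\infty}f_{1}(t,p)=1/2-\pi/4<0$ for $p>1$ (your splitting of the $\infty-\infty$ term is a correct alternative to the paper's rewriting $\sinh(t)\cosh(t)-\tanh(pt)\sinh^{2}(t)=\sinh(t)\cosh((p-1)t)/\cosh(pt)$, which makes the exponents cancel visibly). The one genuine divergence is in the converse of $(ii)$: the paper derives the necessity of $p\geq 4/3$ from the small-$t$ asymptotic $\lim_{t\to 0}f_{1}(t,p)/t^{3}=-(p-4/3)$, whereas you obtain both converses by exclusion from the trichotomy $p\leq 1$, $p\in(1,4/3)$, $p\geq 4/3$, each case having already determined the sign behavior. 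Your route is logically sound and slightly more economical, since it avoids that Taylor computation entirely; the paper's route has the side benefit that the expansion (2.12) is reused later in the proof of Theorem 3.1, so the authors get it ``for free.''
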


\begin{proof}
$(i)$ If $p\leq 1$, then Lemma 2.3$(i)$ and (2.1) lead to the conclusion
that $f_{1}(t, p)>f_{1}(0, p)=0$ for all $t\in (0, \infty)$.

If $f_{1}(t, p)>0$ for all $t\in (0, \infty)$, then $\lim_{t\rightarrow
\infty}f_{1}(t, p)\geq 0$. We claim that $p\leq 1$. Indeed, if $p>1$, then
from (2.1) we have 
\begin{equation*}
\lim_{t\rightarrow \infty}f_{1}(t, p)=\lim_{t\rightarrow \infty}\left[%
-\arctan(\tanh(t))+\frac{\sinh(t)\cosh((p-1)t)}{\cosh(pt)}\right]
\end{equation*}
\begin{equation*}
=\lim_{t\rightarrow \infty}\left[-\arctan(\tanh(t))+\frac{1-e^{-2t}}{2}\frac{%
1+e^{-2|p-1|t}}{1+e^{-2|p|t}}e^{(1+|p-1|-|p|)t}\right]
\end{equation*}
\begin{equation*}
=-\frac{\pi}{4}+\frac{1}{2}<0.
\end{equation*}

$(ii)$ If $p\geq 4/3$, then Lemma 2.3$(ii)$ and (2.1) imply that $f_{1}(t,
p)<f_{1}(0, p)=0$ for all $t\in (0, \infty)$.

If $f_{1}(t, p)<0$ for all $t\in (0, \infty)$, then we clearly see that 
\begin{equation}
\lim_{t\rightarrow 0}\frac{f_{1}(t, p)}{t^{3}}\leq 0.
\end{equation}

It follows from (2.1), (2.2), (2.4) and (2.5) that 
\begin{equation}
\lim_{t\rightarrow 0}\frac{f_{1}(t, p)}{t^{3}}=\lim_{t\rightarrow 0}\frac{%
\partial f_{1}(t, p)/\partial t}{3t^{2}} =\lim_{t\rightarrow 0}\frac{1}{%
3\cosh(2t)\cosh^{2}(pt)}\times\lim_{t\rightarrow 0}\frac{f_{2}(2t, p)}{%
(2t)^{2}}
\end{equation}
\begin{equation*}
=\frac{1}{3}\times\frac{1}{2}u_{1}(p)=-\left(p-\frac{4}{3}\right).
\end{equation*}

Inequality (2.11) and equation (2.12) lead to the conclusion that $p\geq 4/3$%
.

$(ii)$ If $p\in (1, 4/3)$, then from Lemma 2.3$(iii)$ and the facts that $%
f_{1}(0, p)=0$ and $\lim_{t\rightarrow \infty}f_{1}(t, p)=-\pi/4+1/2<0$ we
clearly see that there exists $t_{0}\in (0, \infty)$ such that $f_{1}(t_{0},
p)=0$, $f_{1}(t, p)>0$ for $t\in (0, t_{0})$ and $f_{1}(t, p)<0$ for $t\in
(t_{0}, \infty)$.
\end{proof}

\medskip

\begin{lemma}
Let $t>0$, $p\in (-\infty, 0)\cup (0, \infty)$ and 
\begin{equation}
F(t, p)=\frac{1}{2}\log[\cosh(2t)]+\frac{\arctan(\tanh(t))}{\tanh(t)}-\frac{1%
}{p}\log[\cosh(pt)]-1.
\end{equation}
Then

$(i)$ $F(t, p)$ is strictly increasing with respect to $t$ on $(0, \infty)$
if and only if $p\leq 1$;

$(ii)$ $F(t, p)$ is strictly decreasing with respect to $t$ on $(0, \infty)$
if and only if $p\geq 4/3$;

$(iii)$ there exists $t_{0}\in (0, \infty)$ such that $f_{1}(t_{0}, p)=0$, $%
F(t, p)$ is strictly increasing with respect to $t$ on $(0, t_{0})$ and
strictly decreasing with respect to $t$ on $(t_{0}, \infty)$, where $%
f_{1}(t, p)$ is defined by (2.1).
\end{lemma}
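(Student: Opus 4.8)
The plan is to reduce Lemma 2.5 entirely to Lemma 2.4 by showing that the $t$-derivative of $F(t,p)$ equals $f_{1}(t,p)$ up to a strictly positive factor. First I would differentiate (2.13) term by term. Clearly $\frac{d}{dt}\,\frac12\log\cosh(2t)=\tanh(2t)$ and $\frac{d}{dt}\bigl(-\frac1p\log\cosh(pt)\bigr)=-\tanh(pt)$. For the middle term, writing $x=\tanh t$ (so that $dx/dt=1-x^{2}$) and $g(x)=\arctan(x)/x$, one gets
\[
\frac{d}{dt}\,\frac{\arctan(\tanh t)}{\tanh t}=(1-x^{2})\,g'(x)=\frac{1-x^{2}}{x^{2}}\left(\frac{x}{1+x^{2}}-\arctan x\right),
\]
and hence $\dfrac{\partial F(t,p)}{\partial t}=\tanh(2t)-\tanh(pt)+\dfrac{1-x^{2}}{x(1+x^{2})}-\dfrac{1-x^{2}}{x^{2}}\arctan x$.

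The key step is an algebraic simplification. Using $\tanh(2t)=\dfrac{2x}{1+x^{2}}$ one observes the cancellation
\[
\tanh(2t)+\frac{1-x^{2}}{x(1+x^{2})}=\frac{2x^{2}+1-x^{2}}{x(1+x^{2})}=\frac1x,
\]
so that $\partial F/\partial t=\dfrac1x-\tanh(pt)-\dfrac{1-x^{2}}{x^{2}}\arctan x$. Multiplying through by $x^{2}\cosh^{2}t=\sinh^{2}t$ and substituting $x=\sinh t/\cosh t$, $1-x^{2}=1/\cosh^{2}t$, the right-hand side collapses to
\[
\sinh^{2}t\cdot\frac{\partial F(t,p)}{\partial t}=\sinh t\cosh t-\tanh(pt)\sinh^{2}t-\arctan(\tanh t)=f_{1}(t,p).
\]
Thus $\dfrac{\partial F(t,p)}{\partial t}=\dfrac{f_{1}(t,p)}{\sinh^{2}t}$ for all $t>0$, so $\partial F/\partial t$ and $f_{1}(\cdot,p)$ carry the same sign on $(0,\infty)$.

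Given this identity, all three conclusions follow at once from Lemma 2.4. If $p\le1$, then $f_{1}(t,p)>0$ on $(0,\infty)$ by Lemma 2.4(i), hence $\partial F/\partial t>0$ and $F(\cdot,p)$ is strictly increasing; conversely, if $F(\cdot,p)$ is strictly increasing then $f_{1}(\cdot,p)\ge0$ on $(0,\infty)$, which by Lemma 2.4(ii)–(iii) forces $p\le1$ (the cases $p\ge4/3$ and $p\in(1,4/3)$ are excluded because in each $f_{1}$ is strictly negative on a nonempty subinterval). Part (ii) is the mirror image using Lemma 2.4(ii) and excluding $p\le1$, $p\in(1,4/3)$ via Lemma 2.4(i),(iii). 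For part (iii), when $p\in(1,4/3)$ Lemma 2.4(iii) provides $t_{0}\in(0,\infty)$ with $f_{1}(t_{0},p)=0$, $f_{1}>0$ on $(0,t_{0})$ and $f_{1}<0$ on $(t_{0},\infty)$, so by the identity $F(\cdot,p)$ is strictly increasing on $(0,t_{0})$ and strictly decreasing on $(t_{0},\infty)$, with the same $t_{0}$ appearing in the statement.

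The only genuine obstacle is the derivative computation itself, and within it the single non-obvious point is spotting the cancellation $\tanh(2t)+\frac{1-x^{2}}{x(1+x^{2})}=\frac1x$ that turns an unwieldy expression involving $\arctan(\tanh t)/\tanh t$ into $f_{1}(t,p)/\sinh^{2}t$; everything afterwards is bookkeeping and a direct appeal to Lemma 2.4, and in particular no separate limit arguments (as $t\to0$ or $t\to\infty$) are needed in this lemma since the necessity directions are inherited through the sign equivalence.
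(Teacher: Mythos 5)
Your proposal is correct and follows exactly the paper's route: establish the identity $\partial F(t,p)/\partial t = f_{1}(t,p)/\sinh^{2}(t)$ (the paper's equation (2.14), which you derive in full detail) and then transfer the sign information from Lemma 2.4. The derivative computation checks out, and your handling of the converse directions via the exhaustive trichotomy in Lemma 2.4 is sound.
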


\begin{proof}
It follows from (2.13) that 
\begin{equation}
\frac{\partial F(t, p)}{\partial t}=\frac{-\arctan(\tanh(t))+\sinh(t)%
\cosh(t)-\tanh(pt)\sinh^{2}(t)}{\sinh^{2}(t)}=\frac{f_{1}(t, p)}{\sinh^{2}(t)%
}.
\end{equation}

Therefore, Lemma 2.5 follows from Lemma 2.4 and (2.14).
\end{proof}

\bigskip

\section{Main Results}

\bigskip

\medskip

\begin{theorem}
The inequality 
\begin{equation}
B(a,b)<M_{p}(a,b)
\end{equation}
holds for all $a, b>0$ with $a\neq b$ if and only if $p\geq 4/3$. Moreover,
the inequality 
\begin{equation}
B(a,b)>\lambda_{p}M_{p}(a,b)
\end{equation}
holds for all $a, b>0$ and $a\neq b$ with the best possible parameter $%
\lambda_{p}=e^{\pi/4-1}2^{1/p-1/2}$ if $p\geq 4/3$.
\end{theorem}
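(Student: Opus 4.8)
The plan is to reduce both assertions to the one–variable function $F(t,p)$ already analysed in Lemma 2.5. Since $B$ and $M_p$ are symmetric and homogeneous of degree one, I would assume $a>b>0$ and, dividing by $\sqrt{ab}$, set $a=e^{t}$, $b=e^{-t}$ with $t=\tfrac12\log(a/b)\in(0,\infty)$. A direct computation gives $A(e^{t},e^{-t})=\cosh t$, $Q(e^{t},e^{-t})=\sqrt{\cosh(2t)}$ and $(a-b)/(a+b)=\tanh t$, so $A/T=\arctan(\tanh t)/\tanh t$, and therefore
\[
\log\frac{B(e^{t},e^{-t})}{M_{p}(e^{t},e^{-t})}=\tfrac12\log[\cosh(2t)]+\frac{\arctan(\tanh t)}{\tanh t}-1-\frac1p\log[\cosh(pt)]=F(t,p),
\]
the function defined in (2.13), with the boundary value $\lim_{t\to0^{+}}F(t,p)=0$.

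For (3.1): if $p\ge4/3$, then Lemma 2.5$(ii)$ says $t\mapsto F(t,p)$ is strictly decreasing on $(0,\infty)$, hence $F(t,p)<F(0^{+},p)=0$ for all $t>0$, which is exactly $B(a,b)<M_{p}(a,b)$. For the converse I would argue by contradiction: suppose (3.1) holds for all $a\neq b$, i.e. $F(t,p)<0$ for all $t>0$. If $p\le1$, then by (1.3) and monotonicity of $r\mapsto M_{r}$ one has $M_{p}(a,b)\le M_{1}(a,b)=A(a,b)<B(a,b)$, a contradiction; if $p\in(1,4/3)$, then Lemma 2.5$(iii)$ furnishes $t_{0}>0$ with $F(\cdot,p)$ strictly increasing on $(0,t_{0})$, so $F(t,p)>F(0^{+},p)=0$ there, again a contradiction. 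Hence $p\ge4/3$.

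For (3.2) with $p\ge4/3$: since $F(\cdot,p)$ is strictly decreasing on $(0,\infty)$ by Lemma 2.5$(ii)$, its infimum equals $\lim_{t\to\infty}F(t,p)$ and is not attained. Using the asymptotics $\tfrac12\log[\cosh(2t)]=t-\tfrac12\log2+o(1)$, $\tfrac1p\log[\cosh(pt)]=t-\tfrac1p\log2+o(1)$ and $\arctan(\tanh t)/\tanh t\to\pi/4$ as $t\to\infty$, I get
\[
\lim_{t\to\infty}F(t,p)=\frac{\pi}{4}-1+\Bigl(\frac1p-\frac12\Bigr)\log2=\log\!\bigl(e^{\pi/4-1}2^{1/p-1/2}\bigr)=\log\lambda_{p}.
\]
Consequently $F(t,p)>\log\lambda_{p}$ for every $t>0$, i.e. $B(a,b)>\lambda_{p}M_{p}(a,b)$; and $\lambda_{p}$ is best possible because $B(e^{t},e^{-t})/M_{p}(e^{t},e^{-t})=e^{F(t,p)}\to\lambda_{p}$ as $t\to\infty$, so no larger constant works.

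The only new content beyond the lemmas is the normalization step and the limit evaluation of $F(t,p)$ at infinity (plus the trivial fact $F(0^{+},p)=0$); these are routine, and I do not expect a genuine obstacle here — the real difficulty has been absorbed into Lemma 2.3$(iii)$, whose power–series sign analysis (via Lemma 2.1) drives the monotonicity dichotomy of $F$ used throughout.
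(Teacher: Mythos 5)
Your proposal is correct and follows essentially the same route as the paper: reduce everything to the function $F(t,p)$ of (2.13), apply Lemma 2.5$(ii)$ for the sufficiency of $p\ge 4/3$, and identify $\log\lambda_{p}=\lim_{t\to\infty}F(t,p)$ to get both the lower bound $B>\lambda_{p}M_{p}$ and its sharpness. The only (harmless) deviation is in the necessity direction of (3.1), where the paper reads $p\ge 4/3$ off the local expansion $\lim_{t\to 0^{+}}F(t,p)/t^{2}=-\tfrac{1}{2}\left(p-\tfrac{4}{3}\right)\le 0$, whereas you split into the cases $p\le 1$ (via inequality (1.3) and monotonicity of $r\mapsto M_{r}$) and $p\in(1,4/3)$ (via Lemma 2.5$(iii)$); both arguments are valid.
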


\begin{proof}
Since $B(a,b)$ and $M(a,b)$ are symmetric and homogeneous of degree 1,
without loss of generality, we assume that $b>a>0$. Let $t=\log\sqrt{b/a}>0$%
, $p\in \mathbb{R}$ and $p\neq 0$, $f_{1}(t, p)$ and $F(t, p)$ be defined by
(2.1) and (2.13), respectively. Then (1.1), (1.2), (2.1), (2.12), (2.13) and
(2.14) lead to 
\begin{equation*}
M_{p}(a,b)=\sqrt{ab}\cosh^{1/p}(pt), ~~T(a,b)=\sqrt{ab}\frac{\sinh(t)}{%
\arctan[\tanh(t)]},
\end{equation*}
\begin{equation*}
B(a,b)=\sqrt{ab}\cosh^{1/2}(2t)e^{\arctan(\tanh(t))/\tanh(t)-1},
\end{equation*}
\begin{equation}
\log[B(a,b)]-\log[M_{p}(a,b)]=F(t, p),
\end{equation}
\begin{equation}
F(0^{+}, p)=0,
\end{equation}
\begin{equation}
\lim_{t\rightarrow 0^{+}}\frac{F(t, p)}{t^{2}}=\lim_{t\rightarrow 0^{+}}%
\frac{\partial F(t, p)/\partial t}{2t} =\lim_{t\rightarrow 0^{+}}\frac{%
f_{1}(t,p)}{2t\sinh^{2}(t)}=-\frac{1}{2}\left(p-\frac{4}{3}\right),
\end{equation}
\begin{equation}
\lim_{t\rightarrow \infty}F(t,p)
\end{equation}
\begin{equation*}
=\lim_{t\rightarrow \infty}\left[\left(1-\frac{|p|}{p}\right)t+\frac{1}{2}%
\log\left(\frac{1+e^{-4t}}{2}\right) +\frac{\arctan(\tanh(t))}{\tanh(t)}-%
\frac{1}{p}\log\left(\frac{1+e^{-2|p|t}}{2}\right)-1\right]
\end{equation*}
\begin{equation*}
=\frac{1}{4}\pi-\frac{1}{2}\log 2+\frac{1}{p}\log 2-1=\log (\lambda_{p})
~~(p>0).
\end{equation*}

If $B(a,b)<M_{p}(a,b)$, then (3.3) and (3.5) lead to $p\geq 4/3$.

If $p\geq 4/3$, then from (3.4) and (3.6) together Lemma 2.5$(ii)$ we
clearly see that 
\begin{equation}
\log (\lambda_{p})<\lim_{t\rightarrow \infty}F(t, p)<F(t,p)<F(0^{+}, p)=0
\end{equation}
for all $t>0$ with the best possible parameter $\lambda_{p}$.

Therefore, the double inequality 
\begin{equation*}
\lambda_{p}M_{p}(a,b)<B(a,b)<M_{p}(a,b)
\end{equation*}
holds for all $a, b>0$ and $a\neq b$ with the best possible parameter $%
\lambda_{p}$ follows from (3.3) and (3.7).
\end{proof}

\medskip

Note that 
\begin{equation}
\lambda_{p}M_{p}(a,b)=\frac{\sqrt{2}}{2}e^{\pi/4-1}\left(2^{1/p}M_{p}(a,b)%
\right), ~~\lim_{p\rightarrow \infty}M_{p}(a,b)=\max\{a, b\}.
\end{equation}

Let $p=4/3$, $3/2$, $2, 3, \cdots, \infty$. Then from Lemma 2.2, (3.1),
(3.2) and (3.8) together with the monotonicity of the function $p\rightarrow
M_{p}(a,b)$ we get Corollary 3.1. \medskip

\noindent\textbf{Corollary 3.1}. The inequalities 
\begin{equation*}
\lambda_{\infty}\max\{a, b\}<\cdots<\lambda_{2}M_{2}(a,b)<
\lambda_{3/2}M_{3/2}(a,b)<\lambda_{4/3}M_{4/3}(a,b)
\end{equation*}
\begin{equation*}
<B(a,b)<M_{4/3}(a,b)<M_{3/2}(a,b)<M_{2}(a,b)<\cdots<\max\{a, b\}
\end{equation*}
hold for all $a, b>0$ and $a\neq b$ with the best possible parameters $%
\lambda_{\infty}=\frac{\sqrt{2}}{2}e^{\pi/4-1}=0.5705\cdots$, $%
\lambda_{2}=e^{\pi/4-1}=0.8068\cdots$, $\lambda_{3/2}=2^{1/6}e^{%
\pi/4-1}=0.9056\cdots$ and $\lambda_{4/3}=2^{1/4}e^{\pi/4-1}=0.9595\cdots$.
\bigskip

\begin{theorem}
Let $p_{0}=4\log 2/(4+2\log 2-\pi)=1.2351\cdots$. Then the inequality 
\begin{equation}
B(a,b)>M_{p}(a,b)
\end{equation}
holds for all $a, b>0$ with $a\neq b$ if and only if $p\leq p_{0}$.
\end{theorem}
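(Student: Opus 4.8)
The plan is to carry over the substitution from the proof of Theorem 3.1. Since $B$ and $M_p$ are symmetric and homogeneous of degree one, assume $b>a>0$ and set $t=\log\sqrt{b/a}>0$. By (3.3) we have $\log[B(a,b)]-\log[M_p(a,b)]=F(t,p)$, where $F$ is defined by (2.13) (for $p\neq 0$; the case $p=0$ will be absorbed at the end by monotonicity in $p$). Hence (3.9) is equivalent to $F(t,p)>0$ for all $t>0$. I would first record the two boundary values: $F(0^{+},p)=0$ by (3.4), and, for $p>0$, $\lim_{t\to\infty}F(t,p)=\log\lambda_p=\pi/4-(\log 2)/2-1+(\log 2)/p$ by (3.6). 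The decisive observation is that $p_0$ is exactly the zero of $p\mapsto\log\lambda_p$: direct substitution gives $\log\lambda_{p_0}=0$, and since $(\log\lambda_p)'=-(\log 2)/p^{2}<0$ the map $p\mapsto\log\lambda_p$ is strictly decreasing on $(0,\infty)$, so $\log\lambda_p>0$ for $0<p<p_0$ and $\log\lambda_p<0$ for $p>p_0$. I would also note that $p_0=1.2351\cdots\in(1,4/3)$.

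For necessity, suppose (3.9) holds for all $a\neq b$, i.e. $F(t,p)>0$ on $(0,\infty)$. If $p>p_0$ then in particular $p>0$, so $\lim_{t\to\infty}F(t,p)=\log\lambda_p<0$ and hence $F(t,p)<0$ for all large $t$, a contradiction. Therefore $p\le p_0$.

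For sufficiency, since $r\mapsto M_r(a,b)$ is strictly increasing, $p\mapsto F(t,p)$ is decreasing, so it suffices to prove $F(t,p_0)>0$ for every $t>0$. Because $p_0\in(1,4/3)$, Lemma 2.5(iii) applies: there is $t_0\in(0,\infty)$ such that $F(\cdot,p_0)$ is strictly increasing on $(0,t_0)$ and strictly decreasing on $(t_0,\infty)$. On $(0,t_0]$ this gives $F(t,p_0)>F(0^{+},p_0)=0$; on $(t_0,\infty)$ the function strictly decreases toward $\lim_{t\to\infty}F(t,p_0)=\log\lambda_{p_0}=0$, hence stays positive there as well. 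Thus $F(t,p_0)>0$ on all of $(0,\infty)$, i.e. $B(a,b)>M_{p_0}(a,b)$ for $a\neq b$; and for any $p\le p_0$ and $a\neq b$, monotonicity of the power mean yields $M_p(a,b)\le M_{p_0}(a,b)<B(a,b)$, which also covers $p\le 0$.

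The hard part is already behind us: given Lemmas 2.1--2.5, the argument above is essentially bookkeeping. The only genuine content is the twofold role of $p_0$ --- it is simultaneously the value forcing $\lim_{t\to\infty}F(t,p)=0$ and, by lying in $(1,4/3)$, the value for which $F(\cdot,p)$ has the ``increase-then-decrease'' profile, so that a function vanishing at both ends of $(0,\infty)$ is trapped strictly above zero in between. The delicate work --- the sign of the series coefficients $u_n(p)$, the behaviour of $f_2$, and Lemma 2.1 --- has been carried out upstream, so I do not anticipate a serious obstacle in assembling the proof.
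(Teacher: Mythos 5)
Your proposal is correct and follows essentially the same route as the paper: reduce to the sign of $F(t,p)$ via (3.3), get necessity from $\lim_{t\to\infty}F(t,p)=\log\lambda_{p}<0$ for $p>p_{0}$, and get sufficiency at $p=p_{0}$ by combining $F(0^{+},p_{0})=\lim_{t\to\infty}F(t,p_{0})=0$ with the increase-then-decrease profile from Lemma 2.5(iii), then extend to all $p\le p_{0}$ by monotonicity of $p\mapsto M_{p}(a,b)$. Your writeup is in fact slightly more explicit than the paper's (verifying $\log\lambda_{p_{0}}=0$ and $p_{0}\in(1,4/3)$ directly), but the argument is the same.
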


\begin{proof}
If $B(a,b)>M_{p}(a,b)$, then (3.3) and (3.6) lead to $p\leq p_{0}$.

If $p=p_{0}$, then (3.4), (3.6) and Lemma 2.5$(iii)$ lead to the conclusion
that 
\begin{equation}
F(0^{+}, p_{0})=\lim_{t\rightarrow \infty}F(t, p_{0})=0
\end{equation}
and there exists $t_{0}\in (0, \infty)$ such that the function $t\rightarrow
F(t, p_{0})$ is strictly increasing on $(0, t_{0})$ and strictly decreasing
on $(t_{0}, \infty)$.

Therefore, 
\begin{equation*}
B(a,b)>M_{p_{0}}(a,b)>M_{p}(a,b)
\end{equation*}
for all $p\leq p_{0}$ follows easily from (3.3), (3.10), the piecewise
monotonicity of the function $t\rightarrow F(t, p_{0})$ and the monotonicity
of the function $p\rightarrow M_{p}(a,b)$.
\end{proof}

\bigskip

\noindent\textbf{Corollary 3.2.} Let $f_{1}(t, p)$, $F(t, p)$ and $%
\lambda_{p}$ be defined respectively by (2.1), (2.13) and Theorem 3.1, and $%
p_{0}=4\log 2/(4+2\log 2-\pi)=1.2351\cdots$. Then the inequality 
\begin{equation}
B(a,b)<\lambda_{p}M_{p}(a,b)
\end{equation}
holds for all $a, b>0$ and $a\neq b$ with the best possible parameter $%
\lambda_{p}$ if $p\in (0, 1]$, and the inequality 
\begin{equation}
B(a,b)\leq e^{F(t_{0}, p)}M_{p}(a,b)
\end{equation}
holds for all $a, b>0$ and $a\neq b$ with the best possible parameter $%
e^{F(t_{0}, p)}$ if $p\in (1, p_{0}]$, where $t_{0}$ is the unique solution
of the equation $f_{1}(t, p)=0$ on the interval $(0, \infty)$. In
particular, Numerical computations show that $e^{F(t_{0}, p_{0})}=1.012\cdots
$.

\begin{proof}
If $p\in (0, 1]$, then inequality (3.11) holds for all $a, b>0$ and $a\neq b$
with the best possible parameter $\lambda_{p}$ follows from (3.3) and (3.6)
together with Lemma 2.5$(i)$.

If $p\in (1, p_{0}]$, then inequality (3.12) holds for all $a, b>0$ and $%
a\neq b$ with the best possible parameter $e^{F(t_{0}, p)}$ follows from
(3.3) and Lemma 2.5$(iii)$.
\end{proof}

\medskip

Let $p\in \mathbb{R}$, $b>a>0$, $L_{p}(a,b)=\left(a^{p+1}+b^{p+1}\right)/%
\left(a^{p}+b^{p}\right)$ be the $p$th Lehmer mean [28] of $a$ and $b$, $%
f_{1}(t, p)$ be defined by (2.1), and $t=\log\sqrt{b/a}>0$. Then $f_{1}(t,p)$
can be rewritten as 
\begin{equation}
f_{1}(t,p)=-\arctan(\tanh(t))+\sinh(t)\frac{\cosh((p-1)t)}{\cosh(pt)}
\end{equation}
\begin{equation*}
=\frac{\arctan(\tanh(t))\cosh((p-1)t)}{\cosh(pt)}\left(\frac{\sinh(t)}{%
\arctan(\tanh(t))}-\frac{\cosh(pt)}{\cosh((p-1)t)}\right)
\end{equation*}

\begin{equation*}
=\frac{\arctan(\tanh(t))\cosh((p-1)t)}{\sqrt{ab}\cosh(pt)}%
\left(T(a,b)-L_{p-1}(a,b)\right).
\end{equation*}
\medskip

Lemma 2.4 and (3.13) lead to Corollary 3.3 immediately. \medskip

\noindent\textbf{Corollary 3.3.} (see [29, Theorem 2.2]) The double
inequality 
\begin{equation*}
L_{p}(a,b)<T(a,b)<L_{q}(a,b)
\end{equation*}
holds for all $a, b>0$ with $a\neq b$ if and only if $p\leq 0$ and $q\geq 1/3
$. \medskip

\noindent\textbf{Corollary 3.4.} The double inequality 
\begin{equation*}
\lambda L_{1/3}(a,b)<T(a,b)<\mu L_{0}(a,b)
\end{equation*}
holds for all $a, b>0$ with $a\neq b$ if and only if $\lambda\leq 2/\pi$ and 
$\mu\geq 4/\pi$.

\begin{proof}
Without loss of generality, we assume that $b>a>0$. Let $t=\log\sqrt{b/a}>0$%
. Then simple computations lead to 
\begin{equation}
\frac{T(a,b)}{L_{\frac{1}{3}}(a,b)}=\frac{\sinh(t)\cosh\left(\frac{t}{3}%
\right)}{\cosh\left(\frac{4t}{3}\right)\arctan(\tanh(t))}, ~~\frac{T(a,b)}{%
L_{0}(a,b)}=\frac{\sinh(t)}{\cosh(t)\arctan(\tanh(t))},
\end{equation}
\begin{equation}
\lim_{t\rightarrow \infty}\frac{\sinh(t)\cosh\left(\frac{t}{3}\right)}{%
\cosh\left(\frac{4t}{3}\right)\arctan(\tanh(t))}=\frac{2}{\pi},
~~\lim_{t\rightarrow \infty}\frac{\sinh(t)}{\cosh(t)\arctan(\tanh(t))}=\frac{%
4}{\pi}.
\end{equation}

The log-convexity of the function $r\rightarrow 2^{1/r}M_{r}(a,b)$ given by
Lemma 2.2 implies that 
\begin{equation*}
\left(2^{3/5}M_{5/3}(a,b)\right)^{3/4}\left(2^{3}M_{1/3}(a,b)%
\right)^{1/4}>2^{3/4}M_{4/3}(a,b),
\end{equation*}
which can be rewritten as 
\begin{equation}
\frac{2^{8/5}}{\pi}M_{5/3}(a,b)>\frac{2}{\pi}\frac{M_{4/3}^{4/3}(a,b)}{%
M_{1/3}^{1/3}(a,b)}=\frac{2}{\pi}L_{1/3}(a,b).
\end{equation}

Yang et. al. [30] and Witkowski [31] proved that 
\begin{equation}
\frac{2^{8/5}}{\pi}M_{5/3}(a,b)<T(a,b)<\frac{4}{\pi}A(a,b)=\frac{4}{\pi}%
L_{0}(a,b).
\end{equation}

Therefore, Corollary 3.4 follows from (3.14)-(3.17).
\end{proof}

\medskip \noindent\textbf{Competing interests}

\noindent{The authors declare that they have no competing interests.}

\medskip \noindent\textbf{Authors' contributions}

\noindent{All authors contributed equally to the writing of this paper. All
authors read and approved the final manuscript.}

\medskip \noindent\textbf{Acknowledgements}

\noindent{The research was supported by the Natural Science Foundation of
China under Grants 61374086 and 11171307, and the Natural Science Foundation
of Zhejiang Province under Grant LY13A010004.} \medskip

\end{document}